\numberwithin{equation}{section}
\def\kasten{$~~\mbox{\hfil\vrule height6pt width5pt depth-1pt}$ }
\newtheorem{Theorem}{Theorem}[section]
\newtheorem{Remark}[Theorem]{Remark}
\begin{document}

\def\kasten{$~~\mbox{\hfil\vrule height6pt width5pt depth-1pt}$}
\makeatletter\def\theequation{\arabic{section}.\arabic{equation}}
\makeatother

\newtheorem{theorem}{Theorem}
\centerline{\large \bf A role of random slow manifolds in detecting stochastic bifurcation
\footnote{This work was partly supported by the
 NSFC grant 11531006, 11771449 and 11301197.}\hspace{2mm}
\vspace{1mm}\vspace{1mm}\\ }
\smallskip
\centerline{\bf Ziying He$^{a,b,c}\footnote{ziyinghe@hust.edu.cn}$,
\bf Rui Cai$^{a,b,c}$\footnote{cairui@hust.edu.cn} 
\bf Jinqiao Duan$^{a,b,c,d}\footnote{duan@iit.edu}$
\bf Xianming Liu$^{b,c}\footnote{xmliu@hust.edu.cn}$}
\smallskip
\centerline{${}^a$ Center for Mathematical Sciences,}
 \centerline{${}^b$School of Mathematics and Statistics,}
 \centerline{${}^c$ Hubei Key Laboratory for Engineering Modeling and Scientific Computing}
\centerline{Huazhong University of Sciences and Technology, Wuhan 430074,  China}
\centerline{${}^d$Department of Applied Mathematics, Illinois Institute of Technology, Chicago, IL 60616, USA}

\begin{abstract}
We consider the relation for the stochastic equilibrium states between the reduced system on a random slow manifold and the original system. This provides a theoretical basis for the reduction about sophisticated detailed models by the random slow manifold without significant damage to the overall qualitative properties. Based on this result, we reveal a role of random slow manifolds in detecting  stochastic bifurcation by an example. The example exhibits a stochastic bifurcation phenomenon and possesses a random slow manifold that carries the stochastic bifurcation information of the system. Specifically, the lower dimensional reduced system on the random slow manifold retains the stochastic bifurcation phenomenon from the original system.
\end{abstract}

{\bf Key Words and Phrases}: Random slow manifolds,  stochastic bifurcation, multi-scale system, stochastic equilibrium state, singular perturbation method.

{\footnotesize \textbf{2010 Mathematics Subject Classification}:  60H10, 58J65.}
\section{Introduction}
An enormous growth of a system in both dimension and complexity aspects to develop more sophisticated detailed models to increase efficiency and to advance the design of industrial, engineering and technological tools (such as internal combustion engines, gas turbines, power plants etc.) as well as methods of organic and inorganic synthesis (biotechnology, structural molecular design etc.) \cite{Bykov}. 
The co-existence of disparate time scales is pervasive in many systems. In \cite{Nascimento}, the authors present a numerical investigation of the coupled slow and fast surface dynamics in an electrocatalytic oscillator. It is of a great interest to have a suitable tool for model reduction for slow-fast system without significant damage to the overall qualitative and quantitative properties of the reduced model by comparison with the original one \cite{Bykov}. Slow reduction method is one of such effective tool. In \cite{Floyd}, the authors consider a low-dimensional manifold reduction schemes for actin polymerization dynamics, that is a quasi-steady-state approximation scheme that leverages fast dynamics of the filament tips leading to a five-dimensional system for the 11-dimensional Brooks-Carlsson model. 

We consider a random slow reduction for the stochastic dynamical systems by the random slow manifolds in this paper. Random slow manifolds are geometrical invariant structures  of multi-scale stochastic dynamical systems. It has been shown that the deterministic slow manifold has dramatic effect on the overall dynamical behavior. In \cite{Grafke}, the authors discover that the bifurcation structure of the deterministic slow manifold creates a reaction channel for non-equilibrium transitions leading to vastly increased transition rates. We find that the random slow manifold also has a profound impact on the stochastic bifurcation for the stochastic dynamical system. The existence and the exponential tracking property of random slow manifolds are known \cite{Schmalfu-Schneider, Hongbo Fu}. It is beneficial to take advantage of random slow manifolds in order to examine
dynamical behaviors of   multi-scale stochastic systems, either through the slow manifolds themselves or by the reduced systems on these slow manifolds \cite{Berglund}.   In fact, the random slow manifolds are utilized to describe  the settling of the inertial particle under uncertainty \cite{Jian Ren},     estimate a system parameter \cite{Jian Ren2}, and  understand  certain chemical reactions \cite{Xiaoying Han}.
Todd L Parsons and Tim Rogers derive explicit and general formulae for a reduced-dimension description for well-separated slow and fast stochastic dynamics in the limit of small noise and explore the Michaelis-Menten law of enzyme-catalysed reactions, and the link between the Lotka-Volterra and Wright-Fisher processes as specific examples in \cite{Parsons}. 

In this present paper, we consider the stochastic bifurcation phenomenon with help of a random slow manifold. The related bifurcation phenomenon was also considered in \cite{George W A Constable} through an ecological model of two interacting populations and a simplified epidemiological model, which only involves the deterministic slow manifold. First, we prove the main result  about the stochastic equilibrium states for the original system and the reduced system on the random slow manifold.  Second, according to the main result, we reveal the relation for the stochastic bifurcation phenomenon between the original system and the reduced system on the random slow manifold by an example. The reduced system bears the stochastic bifurcation phenomenon inherited from the original system. Thus we could detect stochastic bifurcation by examining the low dimensional slow system.  By a singular perturbation method \cite{Robert E.,Jian Ren} to derive an explicit but approximate   random slow manifold, we obtain a lower dimensional reduced system on the random slow manifold. Then we illustrate that the slow system captures the stochastic bifurcation of the original system through numerical simulations.

\section{Random slow manifolds and stochastic bifurcation}\label{sectionbifurcation}

We consider the following multi-scale stochastic dynamical system in $\mathbb{R}^{d}$,
\begin{eqnarray}\label{orig}
\left\{\begin{array}{l}
\mathrm{d}x=\varepsilon(Ax+f(x,y))\mathrm{d}t,\qquad\qquad\,\,\,\,\, \mbox{in } \mathbb{R}^{m},\\
\mathrm{d}y=(By+g(x,y))\mathrm{d}t+\sigma\mathrm{d}B_{t},\qquad \mbox{in } \mathbb{R}^{n}.
\end{array}
\right.
\end{eqnarray}
with $m+n=d$. The Euclidean metric in $\mathbb{R}^{d}$, $\mathbb{R}^{m}$ and $\mathbb{R}^{n}$ are denoted by $d(\cdot,\cdot)$, $d_{1}(\cdot,\cdot)$ and $d_{2}(\cdot,\cdot)$ respectively, and $d=d_{1}+d_{2}$. The parameter $\varepsilon\ll1$ quantifies  the time scales for the slow $x$ and fast $y$ components. Here $A, B $ are matrices, $f, g$ are nonlinear vector functions, and $\sigma$ is noise intensity. Brownian motion $B_{t}$ is defined on a sample space $(\Omega,\mathbb{P},\mathcal{F})$; refer to \cite{Ludwig Arnold,JinqiaoDuan}.

For a stochastic dynamical system, a stochastic equilibrium state is a state which attracts or repels all nearby solutions \cite{Schmalfuss}. It is called a stable stochastic equilibrium state if it attracts all nearby solutions. It is called an unstable stochastic equilibrium state, if it repels all nearby solutions.
Assume that  the system \eqref{orig} satisfies the conditions in \cite{Hongbo Fu}, that is exponential dichotomy, Lipschitz condition of the nonlinear term and the gap condition between the spectral and the Lipschitz constant. Then the main result in \cite{Hongbo Fu} holds for the system \eqref{orig}, i.e., there exists a random slow manifold with exponential tracking property,
\begin{eqnarray}\label{smf}
M(\omega)=\{(\xi,h(\xi,\omega)):\xi\in\mathbb{R}^m, \omega\in\Omega\},
\end{eqnarray}
through which the original system  \eqref{orig} can be reduced to a lower dimensional system,
\begin{eqnarray}\label{reduce}
\mathrm{d}x=\varepsilon(Ax+f(x,h(x,\omega)))\mathrm{d}t, \qquad\mbox{in } \mathbb{R}^{m}.
\end{eqnarray}
Furthermore, the reduced system can be used to study the dynamical structure of the original system. In fact, the following result holds. A relation about the random dynamical structure of the original system and the reduced system on a  random \emph{center} manifold was considered in \cite{Boxler,Chen}.

\begin{theorem}\label{equivalent}
Let $h$ be the representation of the random slow manifold in  (\ref{smf}).
If $x^{\star}(\omega)$ is a stable (unstable) stochastic equilibrium state of the reduced system \eqref{reduce}, then $(x^{\star}(\omega),h(x^{\star}(\omega),\omega))$ is also a stable (unstable) stochastic equilibrium state for the original system \eqref{orig}.
\end{theorem}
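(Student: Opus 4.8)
The goal is to transfer the dynamical property (attracting/repelling nearby solutions) from the reduced system on the slow manifold to the full system. I have two tools at my disposal from the excerpt: (1) the slow manifold $M(\omega)$ is **invariant** for the full system, and (2) it has the **exponential tracking property** — every solution of the full system converges exponentially fast (pathwise, in the pullback/random-dynamical-system sense) to a solution that lies on $M(\omega)$, and the dynamics on $M(\omega)$ is conjugate to the reduced system via the graph map $\xi \mapsto (\xi, h(\xi,\omega))$.

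**The stable case.**

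Let $x^\star(\omega)$ be a stable stochastic equilibrium of the reduced system, so $(x^\star(\omega), h(x^\star(\omega),\omega))$ lies on $M(\omega)$ and is an equilibrium of the full system (by invariance and the conjugacy). Take any initial condition $(x_0,y_0)$ near it. By exponential tracking, the full orbit through $(x_0,y_0)$ is shadowed: there is a point $\xi_0$ on the slow manifold such that $d\big((x(t),y(t)),\,(\hat x(t), h(\hat x(t),\omega))\big) \le K e^{-\gamma t} \to 0$, where $\hat x(t)$ solves the reduced equation with $\hat x(0) = \xi_0$. Since $\xi_0$ is close to $x^\star$ (continuity of the tracking/projection, plus closeness of $(x_0,y_0)$ to the manifold point), stability of $x^\star$ for the reduced system gives $d_1(\hat x(t), x^\star(\omega \cdot t)) \to 0$. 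Then a triangle inequality on $\mathbb{R}^d = \mathbb{R}^m \times \mathbb{R}^n$, together with continuity of $h$ in its first argument,
$$ d\big((x(t),y(t)),\,(x^\star, h(x^\star,\omega))\big) \le K e^{-\gamma t} + d_1(\hat x(t),x^\star) + \big| h(\hat x(t),\omega) - h(x^\star,\omega)\big|, $$
shows the full orbit is attracted to the equilibrium. Hence it is a stable stochastic equilibrium state of \eqref{orig}.

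**The unstable case and the main obstacle.**

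For the unstable case I would run the argument in backward time: unstable for the reduced system means stable for the time-reversed reduced flow, and since $M(\omega)$ is invariant (hence also backward-invariant as a set) with a corresponding backward exponential tracking estimate, the same triangle-inequality argument applies to conclude that nearby full-system orbits are repelled forward in time. The step I expect to be the real work — and the one I would spell out carefully — is the \emph{continuity of the shadowing correspondence}: I need that as $(x_0,y_0)$ approaches the point on the manifold, the tracking point $\xi_0$ on $M(\omega)$ it gets assigned approaches $x^\star(\omega)$, so that "nearby in $\mathbb{R}^d$" genuinely translates into "nearby on the reduced system." This follows from the construction of the exponential tracking property in \cite{Hongbo Fu} (the asymptotic phase / projection onto the manifold is Lipschitz or at least continuous in the initial data), but one must invoke it explicitly; without it the triangle inequality above is vacuous. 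The remaining ingredients — invariance giving that the image point is a full-system equilibrium, and Lipschitz continuity of $h$ controlling the last term — are routine given the standing assumptions from \cite{Hongbo Fu}.
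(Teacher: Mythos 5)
Your stable-case argument is essentially the paper's proof: split the distance by the triangle inequality into (i) the exponential tracking error between the full orbit and an orbit on the slow manifold and (ii) the convergence of the reduced orbit to $x^{\star}$, controlling the fast component of (ii) by the Lipschitz constant of $h$, all in the pullback sense. The one place you diverge --- flagging the ``continuity of the shadowing correspondence'' as the real work --- is something the paper sidesteps entirely: it applies the tracking estimate with the on-manifold orbit started at the \emph{vertical projection} $(x_{0},h(x_{0},\theta_{-t}\omega))$, so the shadowed slow coordinate is $x_{0}$ itself, already within $\delta$ of $x^{\star}(\theta_{-t}\omega)$, and no continuity of an asymptotic phase needs to be invoked; the tracking error is then bounded by $M_{1}e^{-M_{2}t}d\big((x_{0},y_{0}),(x_{0},h(x_{0}))\big)\le \delta M_{1}e^{-M_{2}t}$. (The paper also checks explicitly, via invariance of $M(\omega)$ and the solution correspondence, that $(x^{\star}(\omega),h(x^{\star}(\omega),\omega))$ is a stationary orbit of \eqref{orig}; you assert this without the computation, which is minor.)

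The genuine gap is in your unstable case. You propose time reversal ``with a corresponding backward exponential tracking estimate,'' but no such estimate is available: the slow manifold attracts only in forward time, because the fast dynamics is contracting; in backward time the fast direction expands, off-manifold orbits are driven \emph{away} from $M(\omega)$, and there is no backward shadowing of an arbitrary nearby initial point by an on-manifold orbit. The argument should stay in forward time: track the full orbit by an on-manifold orbit whose slow coordinate starts near $x^{\star}$, use the instability of $x^{\star}$ for the reduced system to push that slow coordinate away, and conclude with a reverse triangle inequality --- the distance of the full orbit from $(x^{\star},h(x^{\star}))$ is at least the on-manifold repulsion minus the exponentially decaying tracking error. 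The paper itself only remarks that the unstable case is ``similar,'' so it omits the details too, but your specific backward-time reduction would not go through as stated.
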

\begin{proof}
For the random dynamical system $\phi$ generated by the reduced system \eqref{reduce},
$$\phi(t,\omega,x^{\star}(\omega))=x^{\star}(\theta_{t}\omega).$$
Let $\Phi$  be  the random dynamical system generated by the original system \eqref{orig}. Due to the invariance of the ransom slow manifold,
$$\Phi(t,\omega,M(\omega))\subset M(\theta_{t}\omega).$$
Note that $x$ is a solution of the reduced system \eqref{reduce}  if and only if $(x,h(x,\theta_{t}\omega)))$ is a solution of the original system \eqref{orig}. We have
\begin{eqnarray*}
\begin{aligned}
&\Phi(x^{\star}(\omega),h(x^{\star}(\omega)))\\
=&\bigg(x\big(t,\omega,(x^{\star}(\omega),h(x^{\star}(\omega),\omega))\big),y\big(t,\omega,(x^{\star}(\omega),h(x^{\star}(\omega),\omega))\big)\bigg)\\
=&\bigg(x\big(t,\omega,(x^{\star}(\omega),h(x^{\star}(\omega),\omega))\big),h\big(x(t,\omega,(x^{\star}(\omega),h(x^{\star}(\omega),\omega)),\theta_{t}\omega)\big)\bigg)\\
=&\big(x(t,\omega,x^{\star}(\omega)),h(x(t,\omega,x^{\star}(\omega)),\theta_{t}\omega)\big)\\
=&(\phi(t,\omega,x^{\star}(\omega)),h(\phi(t,\omega,x^{\star}(\omega)),\theta_{t}\omega))\\
=&(x^{\star}(\theta_{t}\omega),h(x^{\star}(\theta_{t}\omega),\theta_{t}\omega))\\
\end{aligned}
\end{eqnarray*}
The first equality is the property of the flow $\Phi$. The second equality is due to the invariance of the random slow manifold. The third equality holds by the correspondence relation for solutions of the original system  \eqref{orig} and the reduced system \eqref{reduce}. The fourth equality is the property of the flow $\phi$. The last equality holds by the condition that $x^{\star}(\omega)$ is a stochastic equilibrium state of the reduced system \eqref{reduce}.

Assume that  $x^{\star}(\omega)$ is a stochastic equilibrium state  of the reduced system \eqref{reduce}. Specifically, for any state $x_{0}(\theta_{-t}\omega)$ in some  neighbourhood $U(x^{\star}(\theta_{-t}\omega);\delta)$,
$$d_{1}(x(t,\theta_{-t}\omega,x_{0}(\theta_{-t}\omega)),x^{\star}(\omega))\to0, \mbox{ as } t\to 0 .$$
Then for any state $(x_{0}(\theta_{-t}\omega),y_{0}(\theta_{-t}\omega))$ in the neighbourhood $U((x^{\star}(\theta_{-t}\omega),h(x^{\star}(\theta_{-t}\omega)));\delta)$ of the state $(x^{\star}(\theta_{-t}\omega),h(x^{\star}(\theta_{-t}\omega)))$,
\begin{eqnarray*}
\begin{aligned}
&d\bigg(\Phi\big(t,\theta_{-t}\omega,(x_{0}(\theta_{-t}\omega),y_{0}(\theta_{-t}\omega))\big),\big(x^{\star}(\omega),h(x^{\star}(\omega)\big)\bigg)\\
\leq&d\bigg(\Phi\big(t,\theta_{-t}\omega,(x_{0}(\theta_{-t}\omega),y_{0}(\theta_{-t}\omega))\big), \big(\phi(t,\theta_{-t}\omega,x_{0}(\theta_{-t}\omega)),h(\phi(t,\theta_{-t}\omega,(x_{0}(\theta_{-t}\omega))))\big)\bigg)\\
&+d\bigg(\big(\phi(t,\theta_{-t}\omega,x_{0}(\theta_{-t}\omega)),h(\phi(t,\theta_{-t}\omega,(x_{0}(\theta_{-t}\omega))))\big),\big(x^{\star}(\omega),h(x^{\star}(\omega)\big)\bigg)\\
\leq&M_{1}e^{-M_{2}t}d\big((x_{0}(\theta_{-t}\omega),y_{0}(\theta_{-t}\omega)),(x_{0}(\theta_{-t}\omega),h(x_{0}(\theta_{-t}\omega)))\big)+d_{1}\big(\phi(t,\theta_{-t}\omega,x_{0}(\theta_{-t}\omega)),x^{\star}(\omega)\big)\\
&+d_{2}\big(h(\phi(t,\theta_{-t}\omega,(x_{0}(\theta_{-t}\omega)))),h(x^{\star}(\omega))\big)\\
\leq&\delta M_{1}e^{-M_{2}t}+(1+L_{h})\cdot d_{1}\big(\phi(t,\theta_{-t}\omega,x_{0}(\theta_{-t}\omega)),x^{\star}(\omega)\big)\\
\to&0,  \quad\mbox{as}\quad t\to+\infty.
\end{aligned}
\end{eqnarray*}
In the second inequality, the positive numbers $M_{1}$ and $M_{2}$ exist,  due to the exponential attracting property about the reduced system to the original system. The third inequality holds with the Lipschitz constant $L_{h}$ of the random slow manifold. The last convergence relation is due to the fact that $x^{\star}(\omega)$ is a stable stochastic equilibrium state.

This result means $(x^{\star}(\omega),h(x^{\star}(\omega),\omega))$ attracts all the states in $U((x^{\star}(\theta_{-t}\omega),h(x^{\star}(\theta_{-t}\omega)));\delta)$, and is thus a stable   stochastic equilibrium state for the original system \eqref{orig}.  It is similar  to prove the case for a unstable stochastic equilibrium state.
\end{proof}
\begin{Remark}
If the stable stochastic equilibrium state $x^{\star}(\omega)$ exponentially attracts all the nearby solutions of the reduced system \eqref{reduce}, then the stable stochastic equilibrium state $(x^{\star}(\omega),h(x^{\star}(\omega),\omega))$ also attracts all the nearby solutions of the original system  \eqref{orig} exponentially. This can be seen from the proof   above.
\end{Remark}
This result can be used to detect stochastic bifurcation of the original system, by examining the number and stability type of stochastic equilibrium states of the lower dimensional slow system.

We now illustrate this by the following example, with a bifurcation parameter $a\in \mathbb{R}$,
\begin{eqnarray}\label{bfeq}
\left\{\begin{array}{l}
\mathrm{d}x=-\varepsilon(ax+\frac{y}{1+x^2})\mathrm{d}t,\qquad\qquad\,\,\,\, x(0)=\xi,\qquad\, \mbox{in } \mathbb{R},\\
\mathrm{d}y=(-2y-\sin x)\mathrm{d}t+\sigma\mathrm{d}B_{t},\qquad y(0)=\eta,\qquad \mbox{in } \mathbb{R}.
\end{array}
\right.
\end{eqnarray}
When parameter $a$ varies, the stochastic system \eqref{bfeq} undergoes an interesting  stochastic bifurcation phenomenon indicated by the stable equilibrium states.  We show this in the left hand side of Figure \ref{figbfreduc} with several solutions simulated by the stochastic implicit Euler scheme \cite{Peter Kloeden}. Figure \ref{figbfreduc} exhibits that the number of the stable equilibrium states for the system \eqref{bfeq} will vary with $a$. For a large positive value of $a$, the number equals to one. As  $a$ decreasing to zero, the number will increase. When $a=0$, there are multiple equilibrium states. For the negative value of $a$, the number will decrease with the decreasing  $a$. It has only one stable equilibrium state for $a=0.6$,  two for $a=0.01$, four for $a=0.001$,    six for $a=0$ (we only plot in a finite interval), four for $a=-0.0003$, two for $a=-0.001$ and zero for $a=-0.006$ in the figure.

This example satisfies the conditions in \cite{Hongbo Fu}, that is exponential dichotomy, Lipschitz condition of the nonlinear term and the gap condition between the spectral and the Lipschitz constant. Thus it has  a random slow manifold, and it can be reduced to a one dimensional system on the random slow manifold. The reduced system also has a stochastic bifurcation phenomenon exhibited by the change of the number of   stochastic equilibrium states. According to   Theorem \ref{equivalent}, the number of its stochastic equilibrium state of the reduced one dimensional system is the same with that of the system \eqref{bfeq}. We will show this   as follows.

We first derive the analytic approximate expression of the random slow manifolds. Introduce $\mathrm{d}z=-2z+\sigma\mathrm{d}B_{t}.$ It has  a stationary solution $z(t)=\sigma\int_{-\infty}^{t}e^{-2(t-s)}\mathrm{d}B_{s}.$
By the transformation
\begin{eqnarray}\label{transform}
T(x,y-z)=(u,v),
\end{eqnarray}
the system \eqref{bfeq} is transformed to
\begin{eqnarray}\label{bfeqr}
\left\{\begin{array}{l}
\dot{u}=-\varepsilon(au+\frac{v+z}{1+u^2}), \qquad u(0)=\xi,\qquad\qquad\quad \mbox{in } \mathbb{R},\\
\dot{v}=-2v-\sin u, \qquad\quad\,\, v(0)=\eta-z(0),\qquad \mbox{in } \mathbb{R}.
\end{array}
\right.
\end{eqnarray}
The original system \eqref{bfeq} and the transformed system \eqref{bfeqr} are conjugated through the transformation \eqref{transform}. \\
We can infer the Lyapunov-Perron equation \cite{Hongbo Fu},
\begin{eqnarray*}
\left\{\begin{array}{l}
u(t)=e^{-\varepsilon a t}u(0)+\int_{0}^{t}e^{-\varepsilon a(t-s)}\frac{v+z}{1+u^2}\mathrm{d}s\\
v(t)=-\int_{-\infty}^{t}e^{-2(t-s)}\sin u \mathrm{d}s.
\end{array}
\right.
\end{eqnarray*}
According to this equation and the Lyapunov-Perron method \cite{Hongbo Fu}, the random slow manifold of the system \eqref{bfeqr}   is a graph
\begin{eqnarray}\label{smf}
\widetilde{M}(\omega)=\{(\xi,\tilde{h}(\xi,\omega)):\xi\in\mathbb{R}, \omega\in\Omega\},
\end{eqnarray}
with an implicit expression $\tilde{h}(\xi,\omega)=-\int_{-\infty}^{0}e^{2s}\sin u\mathrm{d}s.$
Due to the conjugation relation between the two systems, the random slow manifolds of the original system \eqref{bfeq} exists and is a graph
\begin{eqnarray}\label{smf}
M(\omega)=\{(\xi,h(\xi,\omega)):\xi\in\mathbb{R}, \omega\in\Omega\},
\end{eqnarray}
with an implicit expression $h(\xi,\omega)=z(0)-\int_{-\infty}^{0}e^{2s}\sin x\mathrm{d}s.$  Let us find an approximate  expression for the graph  $h$.\\
By the singular perturbation theory \cite{Robert E., Jian Ren}, we can obtain an explicit expression,
\begin{eqnarray}\label{bifurexplicitexpression}
\begin{aligned}
h(\xi,\omega)=z_{0}-\frac{\sin \xi}{2}+\varepsilon\bigg(-\frac{a \xi\cos \xi}{4}+\frac{\sin \xi\cos \xi}{8(1+\xi^2)}+\frac{\sigma\cos \xi}{2(1+\xi^2)}\int_{-\infty}^{0}\tau e^{2\tau}\mathrm{d}B_{\tau}(\omega)\bigg)+o(\varepsilon^2).
\end{aligned}
\end{eqnarray}
Thus, we can reduce the original system \eqref{bfeq} to the following approximate slow system,   omitting the higher order of $o(\varepsilon^2)$  
\begin{eqnarray}\label{bfred}
\begin{aligned}
\dot{x}=-\varepsilon\bigg\{ax+\frac{1}{1+x^2}\bigg[z-\frac{\sin x}{2}-\varepsilon\bigg(\frac{ax\cos x}{4}-\frac{\sin x\cos x}{8(1+x^2)}-\frac{\sigma\cos x}{2(1+x^2)}\int_{-\infty}^{t}(\tau-t)e^{2(\tau-t)}\mathrm{d}B_{\tau}(\omega)\bigg)\bigg]\bigg\}.
\end{aligned}
\end{eqnarray}
We reveal a  stochastic bifurcation phenomenon about this reduced system in the right hand side of Figure \ref{figbfreduc} with several solutions simulated by the stochastic implicit Euler scheme \cite{Peter Kloeden}.
\begin{figure}[H]
\centering
\includegraphics[width=3.5in]{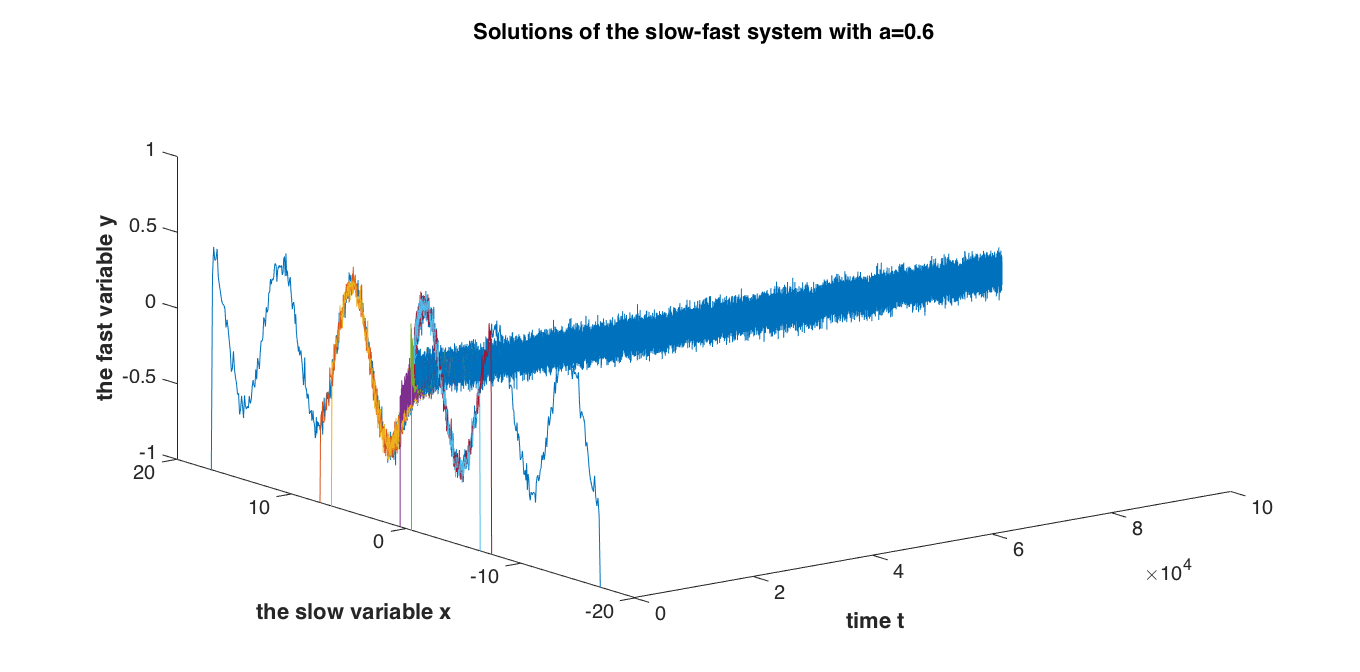}
\includegraphics[width=2in]{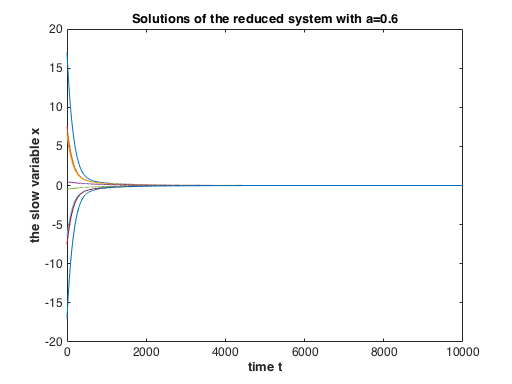}
\end{figure}
\begin{figure}[H]
\centering
\includegraphics[width=3.5in]{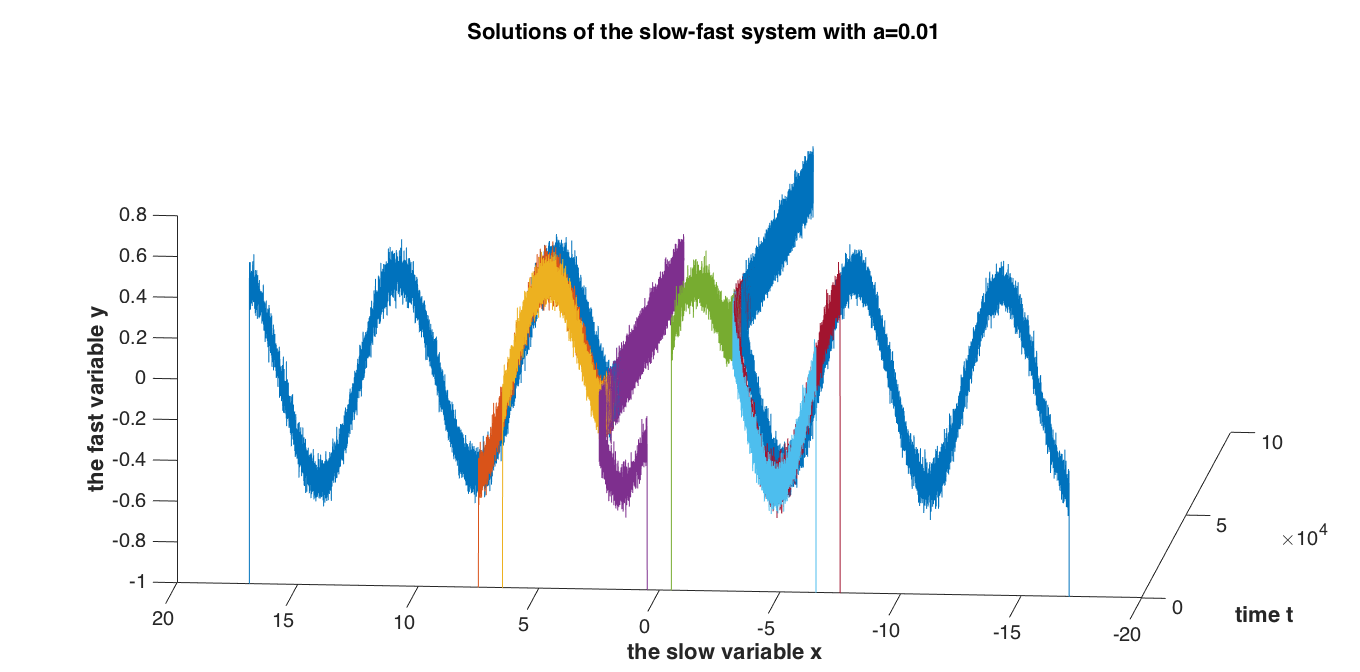}
\includegraphics[width=2in]{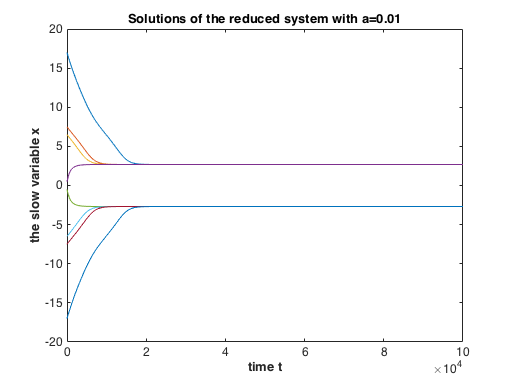}
\end{figure}
\begin{figure}[H]
\centering
\includegraphics[width=3.5in]{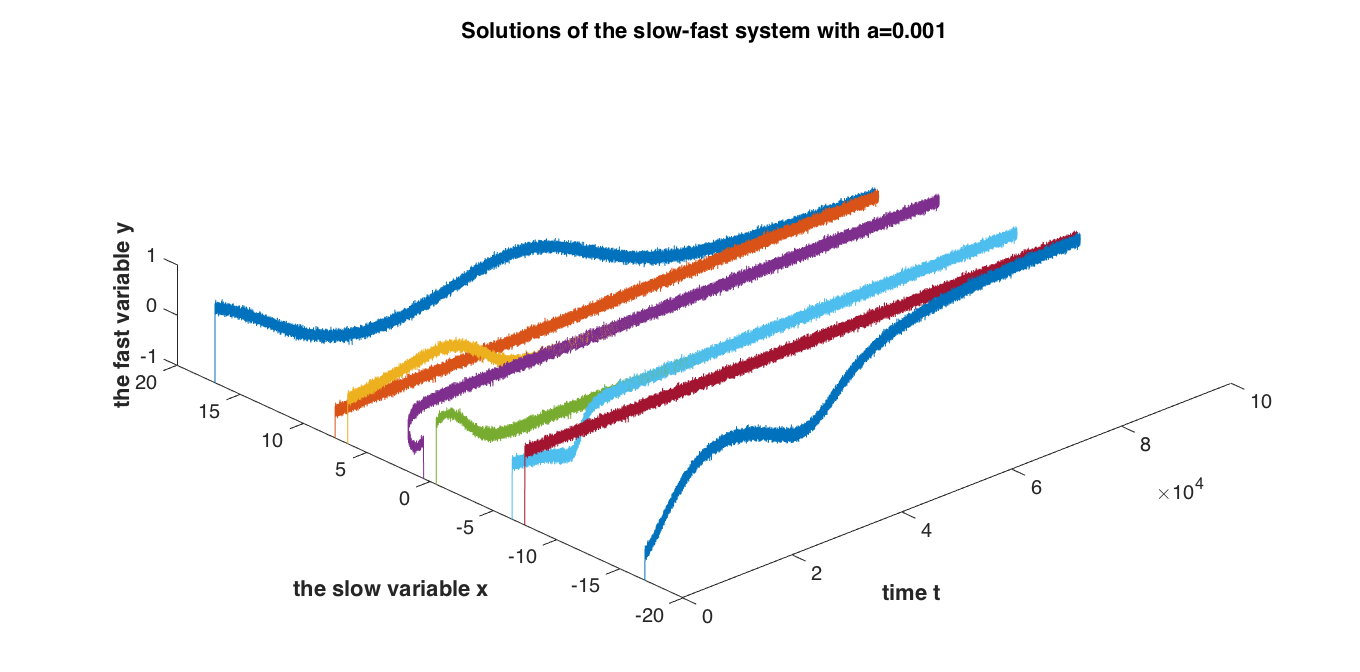}
\includegraphics[width=2in]{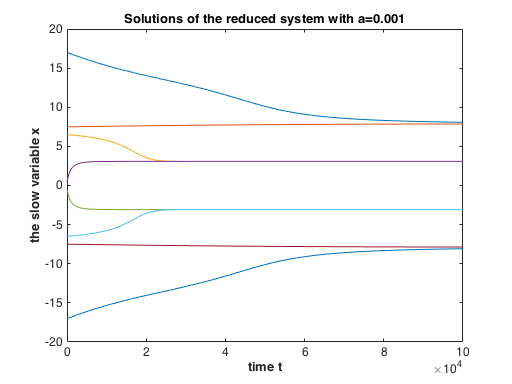}
\end{figure}
\begin{figure}[H]
\centering
\includegraphics[width=3.5in]{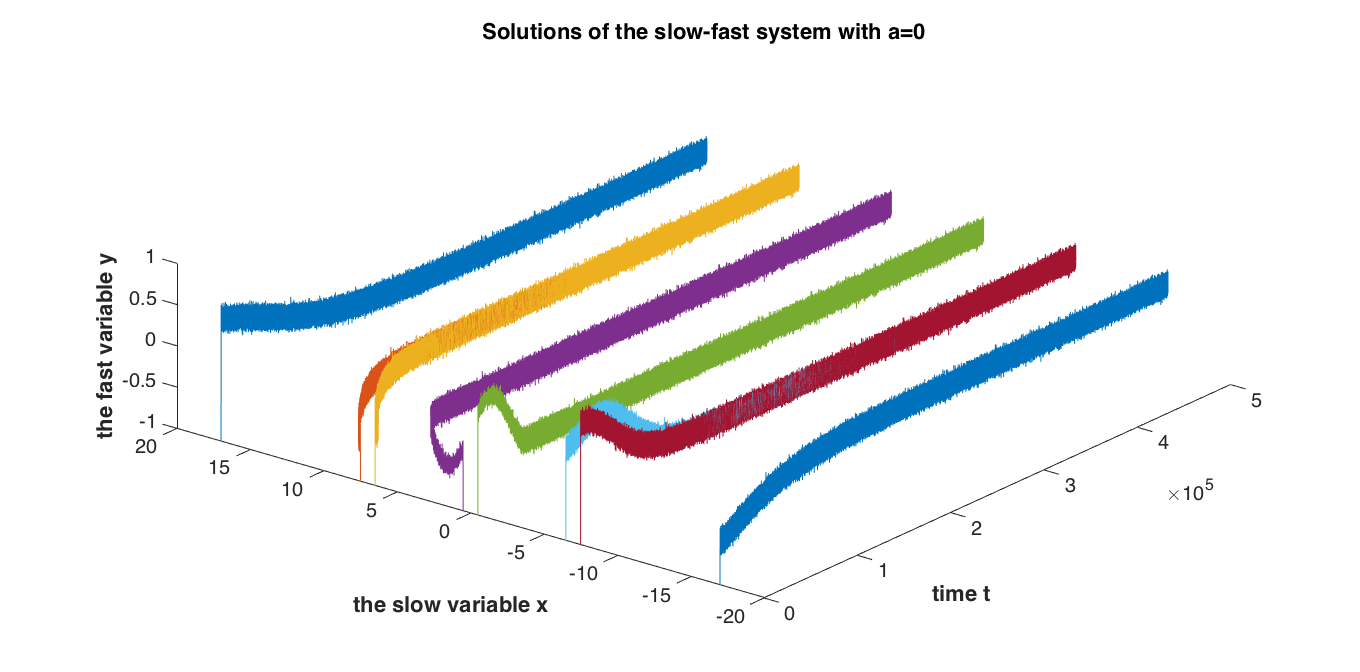}
\includegraphics[width=2in]{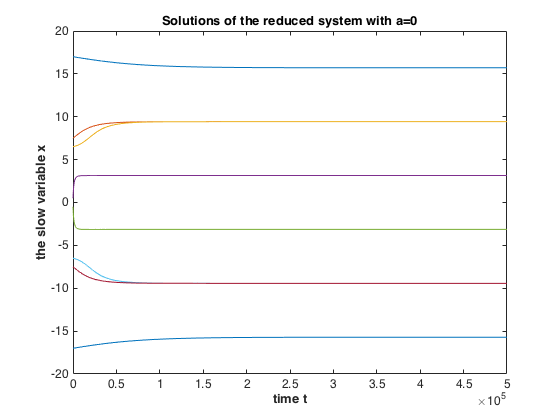}
\end{figure}
\begin{figure}[H]
\centering
\includegraphics[width=3.5in]{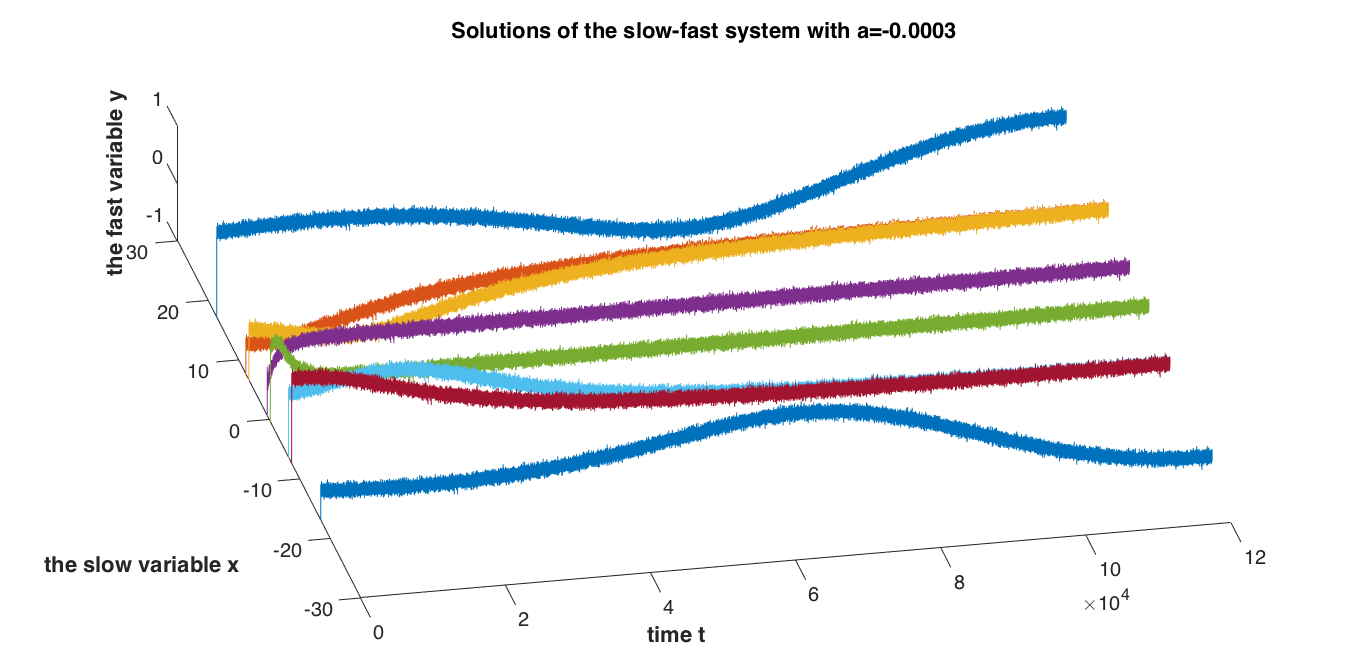}
\includegraphics[width=2in]{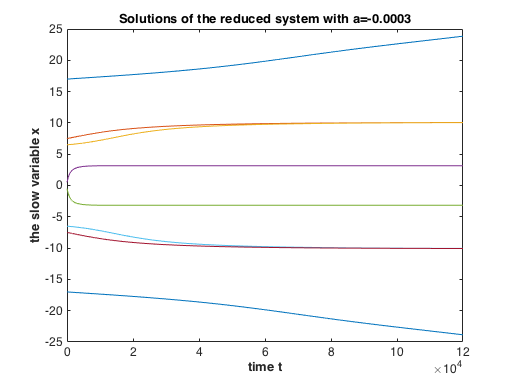}
\end{figure}
\begin{figure}[H]
\centering
\includegraphics[width=3.5in]{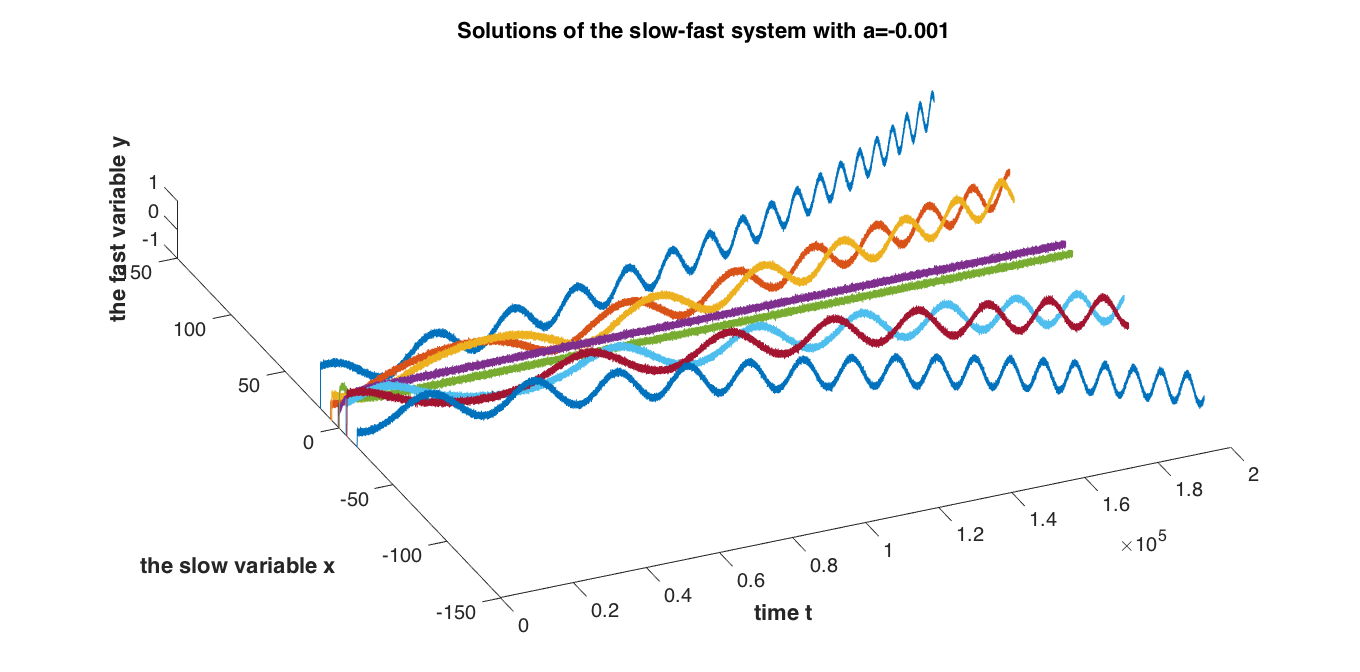}
\includegraphics[width=2in]{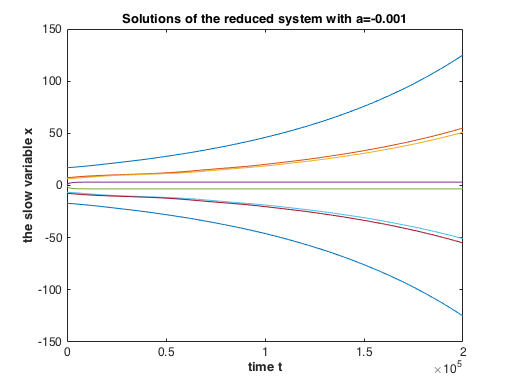}
\end{figure}
\begin{figure}[H]
\centering
\includegraphics[width=3.5in]{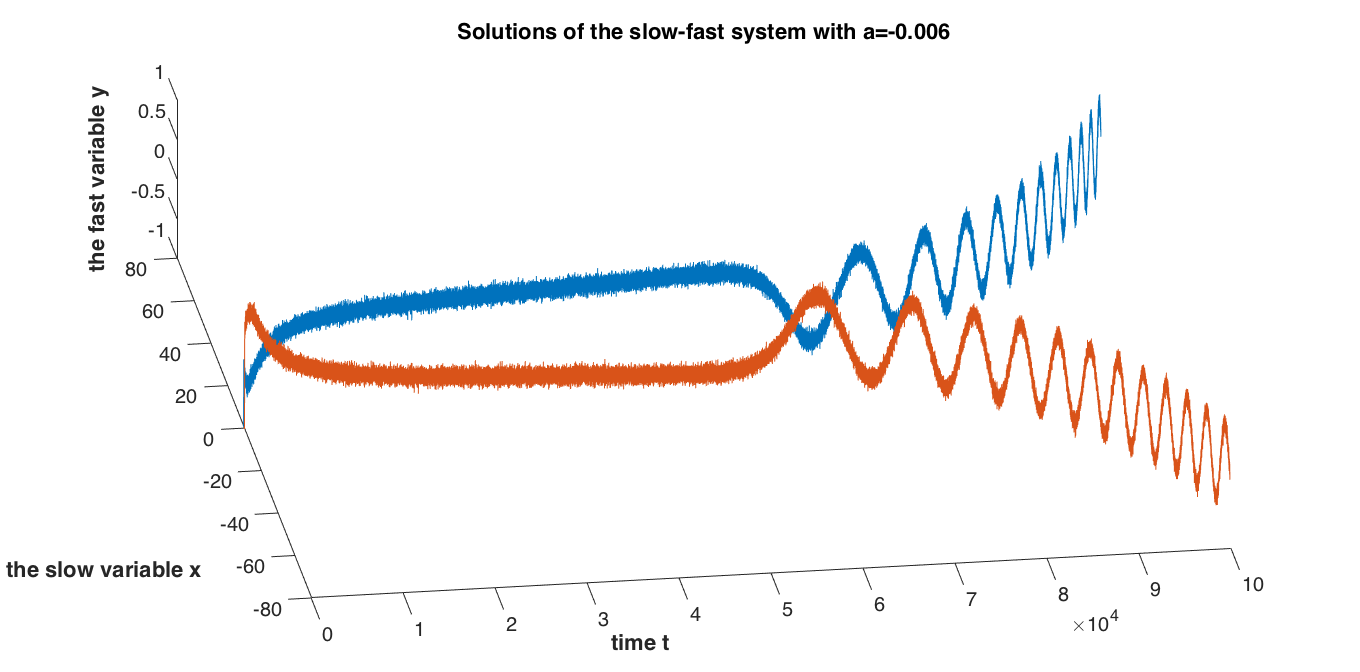}
\includegraphics[width=2in]{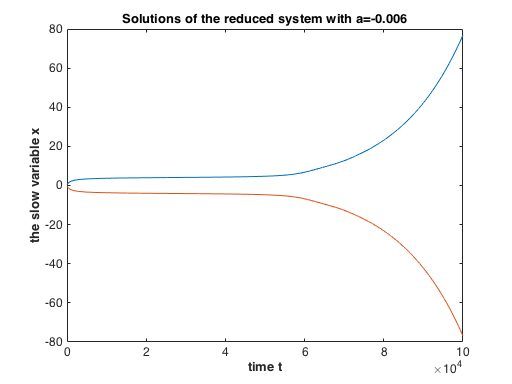}
\end{figure}
\begin{figure}[H]
\centering
\caption{(online color) Several solutions of the system \eqref{bfeq} in the left hand side and the reduced system \eqref{bfred} in the right hand side for parameter $a=0.6$, $0.01$, $0.001$, $0$, $-0.0003$ and $-0.001$ with initial value $(17,-1)$, $(7.5,-1)$, $(6.5,-1)$, $(0.5,-1)$, $(-0.5,-1)$, $(-6.5,-1)$, $(-7.5,-1)$, $(-17,-1)$ respectively. The last picture is two solutions for parameter $a=-0.006$ with initial value $(0.5,-1)$ and $(-0.5,-1)$. The scale parameter $\varepsilon=0.01$ and the noise intensity $\sigma=0.1$.}\label{figbfreduc}
\end{figure}
Figure \ref{figbfreduc} shows that the reduced system \eqref{bfred} exhibits a stochastic bifurcation phenomenon consistent  with the original system \eqref{bfeq}. The number of the stable equilibrium states are the same for   both  systems with the same value of $a$. The position of the stable equilibrium states of the reduced one dimensional system \eqref{bfred}  is also corresponding to that of the original     system \eqref{bfeq}. This simulation result confirms   Theorem \ref{equivalent} in this concrete example. And this means that the reduced system captures  the stochastic bifurcation information for the original system, and thus simplifies the bifurcation analysis of a higher dimensional system.
\section{Conclusions}
The random slow manifold with exponential tracking property plays a significant role on the dynamical behaviors of a slow-fast stochastic dynamical system. To get the slow manifold, we employ the approximation    method via perturbation.    The reduced system on the random slow manifold inherits the stochastic equilibrium states from the original system and thus may be used to detect stochastic bifurcation for the original higher dimensional system, we find that it is closed related to the stochastic bifurcation phenomenon. We have illustrated this bifurcation result with a simple example.

\end{document}